\documentclass[11pt]{amsart}

\usepackage[margin=1.25in]{geometry}
\usepackage[T1]{fontenc}
\usepackage{lmodern}
\usepackage{microtype}

\usepackage{amsmath,amssymb,amsthm,mathtools}
\numberwithin{equation}{section}

\usepackage[hidelinks]{hyperref}
\hypersetup{
pdfauthor={Xiang Fang},
pdftitle={Fourier Dimensions of Mandelbrot Cascades under Minimal Integrability}
}

\newtheorem{theorem}{Theorem}[section]
\newtheorem{proposition}[theorem]{Proposition}

\newtheorem{corollary}[theorem]{Corollary}

\theoremstyle{definition}
\newtheorem{definition}[theorem]{Definition}

\theoremstyle{remark}
\newtheorem{remark}[theorem]{Remark}

\newcommand{\E}{\mathbb{E}}
\newcommand{\Pbb}{\mathbb{P}}
\newcommand{\R}{\mathbb{R}}

\newcommand{\Leb}{\mathcal{L}}

\newcommand{\wh}{\widehat}

\newcommand{\dimF}{\dim_{\mathrm F}}
\newcommand{\dimE}{\dim_{\mathrm E}}
\newcommand{\dimtwo}{\dim_2}

\newcommand{\Aloc}{A_{\mathrm{loc}}}
\newcommand{\alphamin}{\alpha_{\min}}

\newcommand{\one}{\mathbf 1}
\newcommand{\spt}{\operatorname{spt}}

\title[Fourier dimensions of Mandelbrot cascades]
{Fourier Dimensions of Mandelbrot Cascades under Minimal Integrability}

\keywords{Fourier dimension, Mandelbrot cascades, energy dimension, vector-valued martingales}
\subjclass{60G57, 42B10, 28A80}

\author[X. Fang]{Xiang Fang}
\address{Department of Applied Mathematics\\
National Yang Ming Chiao Tung University\\
Hsinchu 30010\\
Taiwan}
\email{xfang@nycu.edu.tw}

\begin{document}

\begin{abstract}
This note announces exact Fourier-dimension formulas for canonical Mandelbrot
cascade measures under the minimal Kahane--Peyri\`ere integrability condition,
and records the canonical \(b\)-adic extension on cubes.  The interval and
circle formulas are those of the joint work with Cai, Cheng, Li, Qu, and Xiao.
In the dyadic interval setting, the theorem is proved in a balanced
vector-weight model allowing dependence between sibling weights; almost surely
on non-extinction,
\[
    \dim_{\mathrm F}(\mu)
    =
    \dim_{\mathrm E}(\mu)
    =
    \dim_2(\mu)
    =
    D_{\mathrm E}(X).
\]
In particular, the scalar specialization gives the canonical
Mandelbrot--Kahane Fourier-dimension formula under the minimal integrability
condition.  On the circle, the endpoint formula is
\[
    \dim_{\mathrm F}(\mu_\circ)=A_{\mathrm{loc}}(W),
\]
where \(A_{\mathrm{loc}}(W)\) is the endpoint lower-local-dimension exponent.

The additional purpose of this note is to record the canonical \(b\)-adic
formula on cubes.  For the \(b\)-adic Mandelbrot cascade on
\([0,1]^d\subset\mathbb R^d\),
\[
    \dim_{\mathrm F}(\mu)
    =
    \min\{2,D_{\mathrm E}(W)\}
\]
almost surely on non-extinction.  The energy exponent is the natural
tree-cylinder square-sum exponent associated with the classical
scaling-exponent formalism.  The Fourier lower bound uses the flat dense-grid
recursion from the interval theorem, while the additional high-dimensional
upper obstruction is the universal Fourier barrier
\(\dim_{\mathrm F}(\mu)\le2\).
\end{abstract}

\maketitle

\section{Introduction}\label{s:Introduction}

\noindent
Mandelbrot introduced multiplicative cascades in connection with intermittency
in turbulence, following the phenomenological works of Kolmogorov,
Landau--Lifshitz, Obukhov, and Yaglom on turbulent energy dissipation
\cite{Kolmogorov1962,LandauLifshitz1959,Obukhov1962,Yaglom1966}.  These models
were developed in Mandelbrot's work on random multiplicative cascades
\cite{Mandelbrot1972,Mandelbrot1974a,Mandelbrot1976}.  The construction was
later placed on a rigorous martingale foundation by Kahane and Peyri\`ere
\cite{KahanePeyriere1976}.

In its simplest dyadic form, one starts with a nonnegative random variable \(W\)
satisfying the minimal Kahane--Peyri\`ere condition
\begin{equation}\label{eq:KP}
 W\ge0,\qquad \E W=1,\qquad
 \E[W\log_2^+W]<\infty,
 \qquad
 \E[W\log_2W]<1.
\end{equation}
Here \(\log_2^+x=\max\{\log_2 x,0\}\), and we use the convention
\(0\log_2 0=0\).  Independent copies of \(W\) are placed on the vertices of the
binary tree, and the level-\(n\) density is obtained by multiplying the weights
along each path.  The resulting measure-valued martingale converges weakly to a
finite random measure \(\mu\); the event \(\{\mu\ne0\}\) is the non-extinction
event.

For a finite Borel measure \(\nu\) on \(\R^m\), we use the Fourier transform
normalization
\[
        \wh{\nu}(\xi)
        =
        \int_{\R^m} e^{-2\pi i\langle x,\xi\rangle}\,d\nu(x).
\]
The question raised by Mandelbrot and later emphasized by Kahane is
harmonic-analytic: determine whether
\(
\wh\mu(\xi)\to0
\)
for \(|\xi|\to\infty\).
whether this convergence is polynomial, and what exponent is optimal
\cite{Kahane1993,Mandelbrot1976}.

Recent work has clarified this question in several regimes.  Chen--Li--Suomala
determined the Fourier dimension of Mandelbrot cascades under a subexponential
tail assumption \cite{ChenLiSuomala2025}.  Chen--Han--Qiu--Wang treated the
problem by a different method under all-positive-moment assumptions, together
with a nonlattice hypothesis \cite{ChenHanQiuWang2025Harmonic,ChenHanQiuWang2025}.
Lin--Qiu--Tan developed a general Fourier-dimension theory for classical
multiplicative chaos \cite{LinQiuTan2025}.  Ryou--Suomala obtained
Fourier-dimension results for cascades on planar curves under all-moment
assumptions \cite{RyouSuomala2026}.  The focus of the present note is the
minimal Kahane--Peyri\`ere regime \eqref{eq:KP}, including the infinite-variance
and pure heavy-tail cases allowed by that condition.

We shall use the following notation throughout this note.

\begin{definition}[Fourier, energy and correlation dimensions]
\label{def:dimensions}
Let \(\nu\) be a nonzero finite Borel measure on \(\R^m\).  The Fourier
dimension of \(\nu\) is
\begin{equation}\label{eq:Fourier-dim}
\dimF(\nu)
=
\sup\bigl\{0\le s\le m:
|\wh\nu(\xi)|=O(|\xi|^{-s/2}) \text{ as } |\xi|\to\infty
\bigr\}.
\end{equation}
The energy dimension of \(\nu\) is
\[
\dimE(\nu)=
\sup\biggl\{0<s<m:
\iint |x-y|^{-s}\,d\nu(x)d\nu(y)<\infty
\biggr\}.
\]

For the dyadic interval grid on \([0,1]\), let \(\mathcal D_n\) denote the
family of dyadic intervals of length \(2^{-n}\).  The corresponding correlation
dimension is
\begin{equation}\label{eq:dimtwo-dyadic}
        \dimtwo(\nu)
        =
        \liminf_{n\to\infty}
        \frac{-\log_2\sum_{I\in\mathcal D_n}\nu(I)^2}{n}.
\end{equation}
For the \(b\)-adic cube grid on \([0,1]^d\), let
\(\mathcal D_n^{b,d}\) denote the family of \(b\)-adic cubes of side length
\(b^{-n}\).  The corresponding correlation dimension is
\begin{equation}\label{eq:dimtwo-badic}
        \dimtwo(\nu)
        =
        \liminf_{n\to\infty}
        \frac{-\log_2\sum_{Q\in\mathcal D_n^{b,d}}\nu(Q)^2}
        {n\log_2 b}.
\end{equation}
\end{definition}

For the cascade measures considered below, the standard square-sum criterion
identifies the relevant correlation exponent with the energy dimension:
\[
        \dimE(\nu)=\dimtwo(\nu).
\]
We shall also use the deterministic inequality
\[
        \dimF(\nu)\le \dimE(\nu)
\]
for nonzero finite Borel measures; see, for instance, \cite{Mattila2015}.

The point of working under \eqref{eq:KP} is that no second-moment assumption is
imposed.  In particular, the minimal Kahane--Peyri\`ere condition allows the
heavy-tail regime
\[
        \E W^{1+\varepsilon}=\infty
        \qquad\text{for every }\varepsilon>0,
\]
where higher-moment concentration methods do not apply directly.  The interval
and circle formulas announced in Sections~2 and~3 are proved in the joint work
\cite{CaiChengFangLiQuXiao2026}.  On the interval, the obstruction is captured
exactly by the energy dimension; on the circle, the sharp obstruction is local
rather than global.  In particular, the scalar specialization gives the
canonical Mandelbrot--Kahane Fourier-dimension formula under the minimal
Kahane--Peyri\`ere integrability condition.

The square-sum energy exponent is closely related to the classical
scaling-exponent theory of independent random cascades, especially Molchan's
work on scaling exponents and multifractal dimensions
\cite{Molchan1996}.  In the dyadic vector notation of
\cite{CaiChengFangLiQuXiao2026}, the dictionary \(w_i=2X_i\) identifies
Molchan's exponent with \(\log_2\rho(q)\).  The role of
\cite{CaiChengFangLiQuXiao2026} is not to repackage this scaling formalism, but
to formulate the tree-cylinder energy statement in the exact form needed for
the Fourier proof and then to prove the matching Fourier lower bound under the
minimal Kahane--Peyri\`ere condition.

The additional purpose of the present note is to record the canonical
\(b\)-adic extension on cubes \([0,1]^d\subset\R^d\).  In that setting the
formula becomes
\[
        \dimF(\mu)=\min\{2,\dimE(\mu)\}
\]
almost surely on non-extinction.  The lower bound uses the same flat dense-grid
Fourier recursion as in the interval theorem.  The additional high-dimensional
upper obstruction is the universal Fourier barrier
\[
        \dimF(\mu)\le2.
\]

\section{Interval cascades under minimal integrability}
\label{s:interval-cascades}

We first recall the interval formula from \cite{CaiChengFangLiQuXiao2026}.  Its
natural form is slightly more general than the classical scalar cascade: the two
sibling weights may be dependent.  Let
\[
        X=(X_0,X_1)\in[0,\infty)^2
\]
be a random vector.  Independent copies of \(X\) are attached to the vertices
of the binary tree, while the two coordinates of a single copy are not assumed
to be independent.  We impose the balanced normalization
\begin{equation}\label{eq:balanced}
        \E X_0=\E X_1=\frac12,
\end{equation}
and the vector analogue of the minimal Kahane--Peyri\`ere condition,
\begin{equation}\label{eq:vector-KP}
\E[X_0\log_2^+X_0+X_1\log_2^+X_1]<\infty,
\qquad
\E[X_0\log_2X_0+X_1\log_2X_1]<0 .
\end{equation}
The balance condition is the Fourier-centering condition: it gives
\(\E\mu_n=\Leb\) at every level.

For a word \(u=(u_1,\ldots,u_n)\), define the path weight
\[
        L_u=\prod_{k=1}^n X_{u_k}(u|k-1),
\]
and the level-\(n\) cascade measure on \([0,1]\) by
\[
        d\mu_n(x)=\sum_{|u|=n}2^nL_u\one_{I_u}(x)\,dx .
\]
The weak limit is denoted by \(\mu\), and we write
\[
        M=\mu([0,1])
\]
for its total mass.  Under \eqref{eq:balanced} and \eqref{eq:vector-KP},
\(\mathbb{P}(M>0)>0\), and all dimension statements below are made on the
non-extinction event \(\{M>0\}\).

The relevant moment profile is
\begin{equation}\label{eq:rho}
        \rho(q)=\E(X_0^q+X_1^q),\qquad q>0.
\end{equation}
Define
\begin{equation}\label{eq:DE-vector}
        D_E(X)=
        \sup_{\substack{1<q<2\\ \rho(q)<1}}
        \left(-\frac2q\log_2\rho(q)\right),
\end{equation}
with value \(0\) if the set is empty.  This is the vector square-sum energy
exponent in the normalization used in \cite{CaiChengFangLiQuXiao2026}.

\begin{theorem}[Exact interval formula]
\label{thm:main-vector-exact-fourier-dimension}
Assume \eqref{eq:balanced} and \eqref{eq:vector-KP}.  Let \(\mu\) be the
limiting dyadic vector cascade measure on \([0,1]\), and set \(M=\mu([0,1])\).
Then, almost surely on \(\{M>0\}\),
\begin{equation}\label{eq:main-vector-exact-formula}
        \dimF(\mu)=\dimE(\mu)=\dimtwo(\mu)=D_E(X).
\end{equation}
Moreover, for every \(0<\sigma<D_E(X)\), there is a finite random constant
\(C_\sigma\) such that
\begin{equation}\label{eq:main-vector-decay}
        |\wh{\mu}(\xi)|
        \le
        C_\sigma |\xi|^{-\sigma/2}
        \qquad (|\xi|\gg1).
\end{equation}
\end{theorem}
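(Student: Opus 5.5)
The proof splits into an upper bound chain and a matching Fourier lower bound. For the upper bound we use the deterministic inequality \(\dimF(\mu)\le\dimE(\mu)\) and the square-sum identification \(\dimE(\mu)=\dimtwo(\mu)\) recalled after Definition~\ref{def:dimensions}. It then suffices to show \(\dimtwo(\mu)\le D_E(X)\) almost surely on \(\{M>0\}\), together with the reverse \(\dimtwo(\mu)\ge D_E(X)\) so that the energy statement is exact.

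For an interval \(I_u\) with \(|u|=n\), the self-similarity of the cascade gives
\[
\mu(I_u)=L_u M_u,
\]
where the \(M_u\) are copies of \(M\), mutually independent across \(u\) and independent of \(\mathcal F_n\). Hence
\[
S_n:=\sum_{|u|=n}\mu(I_u)^2=\sum_{|u|=n}L_u^2M_u^2 .
\]

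\textbf{Lower bound on \(\dimtwo\).} Take \(q\in(1,2)\) with \(\rho(q)<1\). Subadditivity of \(t\mapsto t^{q/2}\) gives
\[
\E S_n^{q/2}\le \rho(q)^n\,\E M^q .
\]
We need \(\E M^q<\infty\) for \(q<\) the critical exponent, which holds by the vector Kahane--Peyri\`ere moment theorem precisely when \(\rho(q)<1\). Borel--Cantelli then yields \(S_n\le 2^{-n(s+o(1))}\) for \(s<-\tfrac2q\log_2\rho(q)\). Optimizing over \(q\) gives \(\dimtwo\ge D_E(X)\).

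\textbf{Upper bound on \(\dimtwo\).} This requires a large-deviation lower bound. We bound \(S_n\) below by the contribution of paths with atypically large \(L_u\), using a change of measure (the Peyrière-type tilted measure with weights \(X_i^{q}/\rho(q)\)) along a Galton--Watson-type subtree. The cases \(q\) at the boundary, where \(\rho\) hits \(1\) or \(q\) reaches \(2\), must be treated separately. For \(q\) reaching \(2\), the bound \(S_n\ge(\sum_u \mu(I_u))^2/2^n\) gives the cap \(\dimtwo\le 1\).

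\textbf{Fourier lower bound (main obstacle).} Fix \(\sigma<D_E(X)\) and pick \(q\in(1,2)\) realizing it. Write \(\wh\mu(\xi)\) for \(|\xi|\sim2^n\) via the decomposition at level \(n\):
\[
\wh\mu(\xi)=\sum_{|u|=n}L_u\, e^{-2\pi i a_u\xi}\,\wh{\mu^{(u)}}(2^{-n}\xi),
\]
where \(a_u\) is the left endpoint of \(I_u\). Subtract the conditional mean. Balance gives \(\E\mu^{(u)}=\Leb\), so the mean term equals \(\wh{\mu_n}(\xi)\), which is explicitly controlled. The fluctuation is a sum of conditionally independent centered terms. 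Since no second moment is available, we apply a von Bahr--Esseen/Burkholder inequality in \(L^q\), \(q<2\), giving
\[
\E\Bigl|\sum\Bigr|^q\lesssim \E\sum_u L_u^q\approx \rho(q)^n .
\]
The mean term \(\wh{\mu_n}\) is handled by recursion across scales: the "flat dense-grid" recursion bounds \(\wh{\mu_m}(\xi)\) for \(m<n\) via the martingale differences \(\mu_{m+1}-\mu_m\), with the same \(L^q\) estimate at each scale, and summing a geometric series. We then take a union bound over a dense grid of frequencies of spacing \(2^{-n\epsilon}\) in each dyadic block \(|\xi|\sim 2^n\); this costs \(2^{n(1+\epsilon)}\) points. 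Interpolation to all \(\xi\) uses the Lipschitz bound on \(\wh\mu\) from compact support.

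The delicate point is that the union-bound loss, the \(L^q\) exponent and the target rate \(2^{-n\sigma/2}\) must combine to give summable tails exactly up to \(-\tfrac2q\log_2\rho(q)\). This requires choosing the decomposition level \(m\approx n\) rather than a fixed fraction, and I expect this bookkeeping, under only \(L^q\) control, to be the hardest step. Borel--Cantelli then gives \eqref{eq:main-vector-decay}, hence \(\dimF\ge D_E(X)\), closing the chain \eqref{eq:main-vector-exact-formula}.
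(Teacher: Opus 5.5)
Your architecture matches the paper's: fractional moments of $S_n$ for $\dimtwo(\mu)\ge D_E(X)$, balance for centering, a dense frequency grid with Lipschitz interpolation, and $\dimF\le\dimE$ for the reverse inequality. But the Fourier step fails as you set it up, and the failure is not a matter of bookkeeping.

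\emph{The union bound cannot close.} A scalar $L^q$ Markov bound at each grid frequency, followed by a union bound over $\gtrsim 2^{n}$ points, gives no decay at all. By the power-mean inequality and $\E(X_0+X_1)=1$ one has $\rho(q)\ge 2^{1-q}$. So at threshold $2^{-n\beta}$ the union bound is at least $2^{n}\rho(q)^{n}2^{nq\beta}\ge 2^{n(2-q+q\beta)}\to\infty$, for every $q<2$ and every $\beta\ge0$. Changing the expansion level does not help. The map $\xi\mapsto\wh\mu(\xi)$ has Lipschitz constant of order $M$ whatever level you expand at, so on $\{|\xi|\sim2^n\}$ you always need at least $2^n$ sample points, and only $q$-th moments with $q<2$ are available.

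\emph{The missing idea} is the paper's vector-valued $\ell^{r}$ contraction. Treat $(F(\eta))_{\eta\in G_n}$ as one random vector in $\ell^{r_n}(G_n)$ with $r_n\asymp\log_2\#G_n\asymp n$, so that $\sup_{G_n}|F|\le\|F\|_{\ell^{r_n}}\le 2\sup_{G_n}|F|$. Then use that $\ell^{r_n}$ has type $2$ with constant $O(\sqrt{r_n})$, which gives $\E\|\sum_v Z_v\|_{\ell^{r_n}}^q\le C r_n^{q/2}\sum_v\E\|Z_v\|_{\ell^{r_n}}^q$ for conditionally independent centered $Z_v$. This replaces the exponential union-bound cost by the polynomial factor $r_n^{q/2}$, while the randomness is still controlled only through $q$-th moments. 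The paper then feeds this into a mesoscopic decomposition and the logarithmic ladder $\Lambda_k\le C(r_n^{q/2}\rho(q)^{k-k'}\Lambda_{k'}+2^{-qk'})$. Once this inequality is available, your own splitting would also close, since each piece is estimated only once. That splitting is the level-$n$ subcascade fluctuation plus the telescoped differences $\wh{\mu_{m+1}}-\wh{\mu_m}$, each carrying a factor $O(2^{m-n})$ from the Fourier transform of normalized Lebesgue measure on a level-$(m+1)$ interval.

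\emph{The energy upper bound is also incomplete,} and precisely in the heavy-tail regime the theorem targets. Write $\phi=-\log_2\rho$. A Peyri\`ere tilt by $X_i^q/\rho(q)$ needs $\rho$ finite near $q$, plus an $L\log L$ condition for the tilted weights. It only produces nodes with $L_u\approx2^{-n\phi'(q)}$, so optimizing it yields at best $\dimtwo(\mu)\le(2-q)\phi'(q)+\phi(q)$ over admissible $q$. This fails in two cases:
\begin{itemize}
\item If $\rho(q)=\infty$ for all $q>1$, then $D_E(X)=0$, you must show $\dimtwo(\mu)=0$, and no tilt with $q>1$ exists.
\item If $\rho$ is finite only up to some $q_0<2$ with $\phi(q)/q$ still increasing there, then $D_E(X)=2\phi(q_0)/q_0$. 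Your tilts give at best $(2-q_0)\phi'(q_0^-)+\phi(q_0)$, which exceeds this by the typically positive amount $(2-q_0)\bigl(\phi'(q_0^-)-\phi(q_0)/q_0\bigr)$.
\end{itemize}
In these cases the square sum is driven by rare, very large single weights rather than by tilted typical paths. This is what the paper's no-plateau lemma, finite-block weighted-growth obstruction and alive-tree amplification capture. Your Cauchy--Schwarz cap $\dimtwo\le1$ only settles the case $D_E(X)=1$.
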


The proof in \cite{CaiChengFangLiQuXiao2026} has two modules.  The first is the
energy module, which proves
\[
        \dimE(\mu)=\dimtwo(\mu)=D_E(X)
        \qquad\text{almost surely on } \{M>0\}.
\]
It is formulated in terms of tree-cylinder square sums, the form needed later
for the Fourier recursion.  The subcritical side uses fractional moments of
square sums.  The supercritical side uses a no-plateau lemma for the moment
profile, a finite-block weighted-growth obstruction, and an alive-tree
amplification argument.  This is the tree-cylinder version of the classical
scaling-exponent mechanism, adapted to the conditioning on non-extinction and
to the exact branching recursion used in the Fourier proof.

The second module is the Fourier module, which proves
\[
        \dimF(\mu)\ge D_E(X)
        \qquad\text{almost surely on } \{M>0\}.
\]
Here the balanced condition gives the centering \(\E\mu_n=\Leb\).  The Fourier
lower bound is then obtained directly in the vector-valued dyadic model, using
a vector-valued \(\ell^r\) contraction, a mesoscopic Fourier decomposition, and
a ladder iteration on dense frequency grids.  The deterministic inequality
\[
        \dimF(\nu)\le\dimE(\nu)
\]
for finite Borel measures gives the reverse Fourier inequality and completes
the exact formula.

\begin{corollary}[Scalar dyadic formula]
\label{cor:scalar-interval-formula}
For the usual scalar dyadic Mandelbrot cascade, let \(W_0,W_1\) be independent
copies of \(W\), and set
\[
        X_i=\frac{W_i}{2},\qquad i=0,1.
\]
Then
\[
        \rho(q)=2^{1-q}\E W^q .
\]
Under the scalar Kahane--Peyri\`ere assumptions \eqref{eq:KP}, almost surely on
\(\{\mu\ne0\}\),
\begin{equation}\label{eq:scalar-main}
        \dimF(\mu)=\dimE(\mu)=\dimtwo(\mu)=D^+(W),
\end{equation}
where
\begin{equation}\label{eq:Dplus}
D^+(W)=
\sup_{1<q<2}
\left[
2-\frac2q\bigl(1+\log_2\E W^q\bigr)
\right]_+,
\end{equation}
with the convention that the corresponding term is \(0\) when
\(\E W^q=\infty\).
\end{corollary}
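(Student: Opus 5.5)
The plan is to deduce Corollary~\ref{cor:scalar-interval-formula} directly from Theorem~\ref{thm:main-vector-exact-fourier-dimension}. There are three things to check. First, the scalar model is a special case of the vector model. Second, the hypotheses \eqref{eq:balanced} and \eqref{eq:vector-KP} follow from \eqref{eq:KP}. Third, \(D_E(X)=D^+(W)\).

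\textbf{Step 1: the two models agree.} With \(X_i=W_i/2\), the path weight is \(L_u=2^{-n}\prod_k W(u|k)\). Hence \(2^nL_u\) is exactly the classical level-\(n\) density, and the vector cascade measures \(\mu_n\) coincide with the scalar Mandelbrot measures. Dependence between siblings is allowed but not needed here, since \(W_0,W_1\) are independent. Consequently the two limits \(\mu\) coincide, and \(\{M>0\}=\{\mu\neq0\}\).

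\textbf{Step 2: the hypotheses.} Balance is immediate: \(\E X_i=\tfrac12\E W=\tfrac12\). For the first condition in \eqref{eq:vector-KP}, note that \(X_i\log_2^+X_i\le \tfrac12 W_i\log_2^+W_i\), so finiteness follows from \eqref{eq:KP}. For the second condition, compute
\[
\E[X_0\log_2X_0+X_1\log_2X_1]=\E[W\log_2(W/2)]=\E[W\log_2W]-1<0 .
\]
Here \(\E[W\log_2 W]\) is well defined, because the negative part of \(W\log_2W\) is bounded by \(1/(e\ln2)\) and the positive part is integrable. The same identity gives \(\rho(q)=2\cdot2^{-q}\E W^q=2^{1-q}\E W^q\).

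\textbf{Step 3: the exponent.} For \(1<q<2\) with \(\E W^q<\infty\),
\[
-\frac2q\log_2\rho(q)=-\frac2q\bigl(1-q+\log_2\E W^q\bigr)=2-\frac2q\bigl(1+\log_2\E W^q\bigr).
\]
The condition \(\rho(q)<1\) is exactly the condition that this quantity is positive.

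Therefore the supremum in \eqref{eq:DE-vector}, which is taken over those \(q\) with \(\rho(q)<1\), equals the supremum of the positive parts in \eqref{eq:Dplus}. The remaining \(q\) contribute nothing to \(D^+(W)\):
\begin{itemize}
\item if \(\rho(q)\ge1\), the bracket is \(\le 0\), so its positive part is \(0\);
\item if \(\E W^q=\infty\), then \(\rho(q)=\infty\), and the term is \(0\) by the stated convention.
\end{itemize}
If the admissible set is empty, both sides equal \(0\). In every case \(D_E(X)=D^+(W)\), and \eqref{eq:scalar-main} follows from \eqref{eq:main-vector-exact-formula}.

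No step is a genuine obstacle. The only point requiring care is bookkeeping: making sure the positive-part convention and the infinite-moment convention in \eqref{eq:Dplus} match the restriction \(\rho(q)<1\) and the empty-set convention in \eqref{eq:DE-vector}.
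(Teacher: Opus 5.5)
Your proposal is correct and takes the same route as the paper. The paper obtains the corollary as the specialization \(X_i=W_i/2\) of Theorem~\ref{thm:main-vector-exact-fourier-dimension}. Your checks of the balance condition, of the vector Kahane--Peyri\`ere condition via \(\E[W\log_2W]-1<0\), of \(\rho(q)=2^{1-q}\E W^q\), and of \(D_E(X)=D^+(W)\) under both conventions supply the bookkeeping that the paper leaves implicit.
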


Thus, in the canonical scalar setting, Corollary~\ref{cor:scalar-interval-formula}
gives the Mandelbrot--Kahane Fourier-dimension formula under the minimal
Kahane--Peyri\`ere integrability condition.  The scalar formula is obtained as
a specialization of the vector theorem, whose Fourier proof allows arbitrary
dependence between sibling weights.

\section{The circle endpoint formula}
\label{s:circle-endpoint}

The circle model is obtained by pushing the dyadic parameter cascade forward
under
\[
        f(t)=(\cos 2\pi t,\sin 2\pi t),
        \qquad 0\le t<1.
\]
Let \(\mu_\circ\) denote the limiting cascade on \(\mathbb S^1\).  Unlike in
the interval case, the sharp obstruction is not the global energy exponent but
a local endpoint exponent:
\begin{equation}\label{eq:Aloc}
        \Aloc(W)=
        \sup_{q>1}
        \left[\frac{q-1-\log_2\E W^q}{q}\right]_+,
\end{equation}
with the convention that the corresponding term is \(0\) when
\(\E W^q=\infty\).

There is a useful comparison with the interval exponent \(D^+(W)\) in
\eqref{eq:Dplus}.  Define
\[
        A_{<2}(W)=
        \sup_{1<q<2}
        \left[\frac{q-1-\log_2\E W^q}{q}\right]_+,
\]
with the same convention on infinite moments.  Then
\begin{equation}\label{eq:comparison}
        D^+(W)=2A_{<2}(W),
        \qquad
        A_{<2}(W)\le \Aloc(W).
\end{equation}
The factor \(2\) in the interval formula comes from the Fourier-dimension
normalization relative to the one-dimensional energy exponent.  On the circle,
the endpoint is instead governed by points where the cascade has the smallest
lower local dimension; this is the local obstruction created by curvature.

We now state the endpoint formula on the circle, the prototypical
curved-support case with nonzero curvature.  This is the circle theorem from
\cite{CaiChengFangLiQuXiao2026}.

\begin{theorem}[Endpoint formula on the circle]
\label{thm:circle}
Assume \eqref{eq:KP}.  Then, almost surely on the non-extinction event,
\begin{equation}\label{eq:circle-main}
        \dimF(\mu_\circ)=\Aloc(W).
\end{equation}
In particular, for every \(0<\sigma<\Aloc(W)\), there is a finite random
constant \(C_\sigma\) such that
\[
        |\wh{\mu_\circ}(\xi)|
        \le
        C_\sigma |\xi|^{-\sigma/2}
        \qquad (|\xi|\gg1,\;\xi\in\R^2).
\]
\end{theorem}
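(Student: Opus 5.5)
The proof splits into an upper bound \(\dimF(\mu_\circ)\le\Aloc(W)\), coming from a local obstruction, and a lower bound \(\dimF(\mu_\circ)\ge\Aloc(W)\), which carries the decay estimate.

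\emph{Upper bound.} We first identify \(\Aloc(W)\) with the almost sure infimum of lower local dimensions of \(\mu\), equivalently of \(\mu_\circ\), on non-extinction. The tool is the multifractal description of the cascade: for \(q>1\) with \(\E W^q<\infty\), a size-biased (Peyri\`ere-type) measure tilted by \(W^q\) concentrates on points where \(\mu(I_n(x))\approx 2^{-n\alpha}\) with \(\alpha\) close to \((q-1-\log_2\E W^q)/q\) along a subsequence. Maximizing over \(q\) gives points \(x_0\) whose lower local dimension is arbitrarily close to \(\Aloc(W)\). Taking the supremum over \(q>1\) makes \(\Aloc\) an \emph{infimum} of attainable exponents \(\alpha\). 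Next, at such a point \(f(x_0)\) on the circle, take \(\xi=R\,f(x_0)\) normal to the circle. The phase \(\langle f(t),\xi\rangle\) is stationary at \(t_0\), so on an arc of length \(R^{-1/2}\) around \(t_0\) the phase varies by \(O(1)\). If \(\wh{\mu_\circ}(\xi)=O(R^{-s/2})\) for \(s>\Aloc\), we would localize with a smooth bump \(\phi\) adapted to the \(R^{-1/2}\)-arc, with \(\widehat\phi\) essentially supported in a ball of radius \(R^{1/2}\). Averaging \(\wh{\mu_\circ}\) over \(\xi+B(0,R^{1/2})\) then controls \(\int\phi\,e^{-2\pi i\langle\cdot,\xi\rangle}d\mu_\circ\), which has size \(\gtrsim\mu(B(t_0,R^{-1/2}))\gtrsim R^{-\alpha/2}\). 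This contradicts the decay, giving \(\dimF\le\Aloc\). The non-cancellation of the phase on the cap must be checked, for instance by a real-part estimate using \(\cos\ge 1/2\) on the cap after modulating.

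\emph{Lower bound.} We decompose \(\mu_\circ\) at the scale adapted to \(|\xi|=R\). For each dyadic arc \(I_u\) at level \(n\) with \(2^{-n}\approx R^{-1/2}\) (the curvature scale), the frequency component of \(\xi\) tangential to \(f(I_u)\) is of size \(\gtrsim R\,\mathrm{dist}(t,\text{stationary points})\).
\begin{itemize}
\item Away from the two stationary points, we reparametrize and reduce to the interval Fourier recursion behind Theorem~\ref{thm:main-vector-exact-fourier-dimension}. That recursion uses the flat dense-grid ladder and the vector \(\ell^r\) contraction with \(1<r<2\), and it gives decay governed by \(A_{<2}\le\Aloc\). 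Since the effective tangential frequency is large, this contribution decays at rate \(R^{-\sigma/2}\) for every \(\sigma<2A_{<2}\). That rate is at least as good as \(R^{-\sigma/2}\) for \(\sigma<\Aloc\) whenever \(2A_{<2}\ge\Aloc\); otherwise we need to track the one-dimensional gain more carefully via \eqref{eq:comparison}.
\item Near the stationary points, we bound trivially by \(\mu(B(t_0,R^{-1/2}))\) plus dyadic annuli \(2^kR^{-1/2}\). Each annulus has tangential frequency about \(2^kR^{1/2}\) and contributes mass \(\times\) (interval decay at that frequency).
\end{itemize}
The uniform bound \(\sup_{t}\mu(B(t,r))\le C_\epsilon r^{\Aloc-\epsilon}\) holds a.s. This is a standard moment-plus-union-bound estimate: \(\E\sum_{|u|=n}\mu(I_u)^q\approx 2^{n(\log_2\E W^q+1-q)}\) combined with Kahane's \(L^q\) bound on total mass \(\E M^q<\infty\) (for the valid \(q\)). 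It controls the central cap by \(R^{-(\Aloc-\epsilon)/2}\). Summing the annuli gives a geometric series dominated by the central term.

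\emph{Expected main obstacle.} The hardest step is making the lower bound uniform over all \(\xi\in\R^2\) with only the minimal integrability \eqref{eq:KP}, where \(\E M^q\) may be infinite for \(q\) near the optimizer. For this we would use Kahane's criterion, which gives \(\E M^q<\infty\) exactly when \(\E W^q<2^{q-1}\), i.e. in the range where the \(q\)-term in \(\Aloc\) is positive. We would work only with \(q\) in that range, truncate \(W\) where needed, and pass to a countable net of directions and frequencies with Borel--Cantelli to get the random constant \(C_\sigma\).
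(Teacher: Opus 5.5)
\textbf{Upper bound.} This part follows the paper's route. You combine the identity $\alphamin(\mu_\circ)=\Aloc(W)$ with the deterministic curved-support obstruction. Your bump-function argument at a point of small lower local dimension, with $\xi$ normal to the circle, proves that obstruction correctly. One correction to your sketch of the identity: the Peyri\`ere measure tilted by $W^q$ sees local dimension $\tau'(q)$, where $\tau(q)=q-1-\log_2\E W^q$, not $\tau(q)/q$. The two agree only where the tangent to $\tau$ passes through the origin. When the supremum in \eqref{eq:Aloc} sits at the edge of the moment range, tilting alone need not reach $\Aloc(W)$. Your local-mass bound $\sup_t\mu(B(t,r))\lesssim r^{\Aloc(W)-\varepsilon}$, via $\E M^q<\infty\iff\E W^q<2^{q-1}$, is fine and handles the stationary cap as in the paper. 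The gap is in the rest of the lower bound, which is the substance of the theorem.

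\textbf{The gap in the lower bound.} No reparametrization reduces the circle to Theorem~\ref{thm:main-vector-exact-fourier-dimension}.

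\emph{Why the reduction fails.} The interval recursion rests on an exact fact: a linear phase restricted to $I_v$ and rescaled is again linear, at frequency $2^{-m}\xi$. The phase $t\mapsto\langle f(t),\xi\rangle$ has curvature of size $|\xi|$, and a nonlinear change of variable destroys the dyadic branching structure of $\mu$. Linearization is valid only on pieces of length $\ell$ with $|\xi|\ell^2\lesssim1$. Moreover, the normalized decay of a piece is governed by frequency times length, not by frequency alone.
\begin{itemize}
\item In your $k$-th annulus this product is $2^{2k}$, not $2^k|\xi|^{1/2}$. After rescaling, the annulus carries a chirp whose curvature is also of size $2^{2k}$, so linear-phase interval decay cannot be applied to it as a black box.
\item In the bulk, pieces of length $|\xi|^{-1/2}$ see only frequency $|\xi|^{1/2}$, so summing them in absolute value loses half the exponent.
\end{itemize}
What is needed is cancellation across pieces, uniformly in the direction of $\xi$, because the stationary point moves with $\xi/|\xi|$.

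\emph{Why uniformity is the obstruction.} A per-piece bound of the form ``mass $\times$ interval decay'' would require
\[
\sup_{|u|=n}2^{-n}Y_uC_u\lesssim 2^{-n(\Aloc(W)-\varepsilon)},
\]
where $2^{-n}Y_u$ is the cascade weight of $I_u$ and $C_u$ is the random decay constant of the descendant cascade. The interval machinery controls such constants only through $L^q$ bounds with $q<2$, but the optimizing $q$ in \eqref{eq:Aloc} may be $\ge2$. Likewise, an $\ell^r$-contraction at moment order $q<2$ near a moving stationary point only yields the exponent $A_{<2}(W)$.

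\emph{What the paper does instead.} It proves the separate Theorem~\ref{thm:main-finite-r-annular}:
\begin{itemize}
\item a decomposition into a stationary tube plus dyadic derivative bands;
\item exact dyadic martingale arrays on each band, with an oscillation gain before the phase-bin scale and local-mass decay after it;
\item Freedman's inequality after predictable capping, the cap paid by an $r$-tail compensator for any witness $r$ of $2^{1-r}\E W^r\le2^{-r(s+\delta)}$, possibly with $r\ge2$.
\end{itemize}
This controls the quadratic variation by maximal local masses, hence by $\Aloc(W)$, with tails summable over annuli.

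\textbf{A smaller point.} Your first bullet needs $\Aloc(W)\le2A_{<2}(W)$. This inequality is true: concavity of $\tau$ with $\tau(1)=0$ gives $\tau(q)/q\le\tau(2)\le2A_{<2}(W)$ for $q\ge2$. It does not follow from \eqref{eq:comparison}, which points the other way.
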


Theorem~\ref{thm:circle} is proved for the canonical scalar Mandelbrot cascade.
The vector-valued interval theorem, Theorem~\ref{thm:main-vector-exact-fourier-dimension},
does not presently have a circle analogue in \cite{CaiChengFangLiQuXiao2026}.
The extension from the circle to fixed \(C^2\) Jordan curves with nonvanishing
curvature is developed separately in \cite{CF-curve}.

The proof of Theorem~\ref{thm:circle} has two parts.  The upper bound is a
deterministic endpoint obstruction coming from local dimension.  The lower
bound is the main Fourier-analytic part, and its technical input is the
finite-\(r\) annular theorem stated below.

We first discuss the upper bound.  For a finite Borel measure \(\nu\) on
\(\mathbb S^1\), define the minimum lower local dimension
\[
\alphamin(\nu)
=
\inf_{x\in\spt\nu}
\liminf_{r\downarrow0}
\frac{\log_2\nu(B(x,r))}{\log_2 r}.
\]
The local-dimension theorem gives
\[
        \alphamin(\mu_\circ)=\Aloc(W)
\]
on non-extinction.  On the other hand, a deterministic curved-support estimate
gives
\[
        \dimF(\nu)\le \alphamin(\nu)
\]
for every nonzero finite measure supported on \(\mathbb S^1\).  Combining these
two estimates yields
\[
        \dimF(\mu_\circ)\le \Aloc(W).
\]

We next turn to the lower bound.  The finite-\(r\) annular theorem converts one
finite-moment witness into uniform Fourier decay on dyadic frequency annuli.

\begin{definition}[Finite-\(r\) witnessed hypothesis]
\label{def:finite-r-witnessed-hypothesis}
Let \(0<s<1\).  A nonnegative random variable \(U\) with \(\E U=1\) satisfies
the finite-\(r\) witnessed hypothesis at exponent \(s\) if there exist \(r>1\)
and \(\delta>0\) such that
\begin{equation}\label{eq:finite-r-witnessed-gap}
        \E[U^r]<\infty,
        \qquad
        2^{1-r}\E[U^r]\le 2^{-r(s+\delta)}.
\end{equation}
\end{definition}

Let \(\widetilde\nu\) be the dyadic scalar cascade on \([0,1)\) generated by
\(U\), and let
\(
\nu_\circ=f_\#\widetilde\nu
\)
be its pushforward to \(\mathbb S^1\).

\begin{theorem}[Finite-\(r\) annular theorem]
\label{thm:main-finite-r-annular}
Let \(0<s<1\), \(r>1\), and \(\delta>0\).  Let \(U\ge0\) satisfy
\(\E U=1\) and \eqref{eq:finite-r-witnessed-gap}.  Then there exist constants
\(C,c,\eta>0\), depending only on \(s,r,\delta\) and the law of \(U\), such
that, for every \(n\ge1\),
\begin{equation}\label{eq:finite-r-annular-estimate}
\Pbb\left(
\sup_{2^n\le |\xi|\le 2^{n+1}}
|\wh{\nu_\circ}(\xi)|>C2^{-sn/2}
\right)
\le
C\exp(-c2^{\eta n})+C2^{-cn}.
\end{equation}
Consequently,
\begin{equation}\label{eq:finite-r-annular-as-decay}
        |\wh{\nu_\circ}(\xi)|
        =
        O(|\xi|^{-s/2})
        \qquad (|\xi|\to\infty)
\end{equation}
almost surely.
\end{theorem}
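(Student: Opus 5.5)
We plan to prove the annular estimate by a multiscale decomposition of \(\wh{\nu_\circ}(\xi)\) at a mesoscopic level \(m=m(n)\approx\theta n\), with \(\theta\in(0,1)\) chosen in terms of \(s,r,\delta\). The almost-sure statement \eqref{eq:finite-r-annular-as-decay} then follows from \eqref{eq:finite-r-annular-estimate} by Borel--Cantelli, since the right-hand side is summable in \(n\).

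\textbf{Step 1: branching recursion.} Write the cascade through the level-\(m\) branching identity
\[
        \widetilde\nu=\sum_{|u|=m}L_u\,(S_u)_\#\widetilde\nu^{(u)},
\]
where \(S_u\) is the affine map onto \(I_u\) and the \(\widetilde\nu^{(u)}\) are i.i.d.\ copies of \(\widetilde\nu\), independent of \(\mathcal F_m\). Pushing forward by \(f\) gives
\[
        \wh{\nu_\circ}(\xi)=\sum_{|u|=m}L_u\int e^{-2\pi i\langle f(S_u t),\xi\rangle}\,d\widetilde\nu^{(u)}(t).
\]

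\textbf{Step 2: split into mean and fluctuation.} We separate \(\widetilde\nu^{(u)}=\Leb+(\widetilde\nu^{(u)}-\Leb)\).

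For the mean part, we obtain \(\int_{I_u}e^{-2\pi i\langle f(t),\xi\rangle}\cdot 2^m\,dt\) times \(L_u\). This is the density of \(\mu_m\) integrated against the curved phase. For \(|\xi|\sim 2^n\) we estimate it by van der Corput (curvature) on each arc. The nonstationary arcs give \((2^{n-m}\cdot\text{angle})^{-1}\). The at most two stationary arcs, where \(\xi\) is normal to the circle, each contribute at most \(\min(2^{-m},2^{-n/2})\) times \(2^mL_u\). The resulting bound is of order \(\max_u L_u\,2^{m}\cdot 2^{-n/2}\) plus a tail.

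We control \(\max_{|u|=m}L_u\) using the \(r\)-th moment. By Markov,
\[
        \Pbb\Bigl(\max_u L_u>2^{-m(s+\delta/2)}\Bigr)\le 2^{mr(s+\delta/2)}\,\E\sum_u L_u^r,
\]
with \(\E\sum_u L_u^r=(2^{1-r}\E U^r)^m\,2^{-mr}\)-type normalization. The witnessed gap \eqref{eq:finite-r-witnessed-gap} makes this at most \(2^{-cm}\). This is the source of the polynomial term \(C2^{-cn}\), and it is exactly the local-dimension mechanism matching \(\Aloc\).

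For the fluctuation part, the summands \(L_u\bigl(\wh{(\cdot)}^{(u)}-\text{mean}\bigr)\) are conditionally independent and centered given \(\mathcal F_m\). We apply a von Bahr--Esseen inequality in \(L^r\) with \(1<r\le2\), or alternatively a truncation plus Bernstein/Hoeffding, at each fixed frequency. This bounds the \(r\)-th moment by \(\sum_u L_u^r\,\E|Z|^r\). After truncating the total masses \(\widetilde\nu^{(u)}([0,1])\) at level \(2^{\epsilon n}\), Hoeffding yields the stretched-exponential term \(\exp(-c2^{\eta n})\), using that \(\sum_u L_u^2\le\max_u L_u\sum_u L_u\). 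The truncation error is handled by the finite \(r\)-moment, since \(\E\widetilde\nu([0,1])^r<\infty\) under \(\E U^r<\infty\) and \(2^{1-r}\E U^r<1\).

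\textbf{Step 3: from grid points to the full annulus.} We pass to the supremum over the annulus by taking a net of frequencies of mesh \(2^{-\epsilon n}\) in \(\{2^n\le|\xi|\le2^{n+1}\}\), which has \(2^{O(n)}\) points. Since \(|\nabla\wh{\nu_\circ}|\le 2\pi\,\nu_\circ(\mathbb S^1)\), a union bound suffices, and the stretched-exponential tail absorbs the polynomial count.

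\textbf{Main obstacle.} The hard part is the bookkeeping in choosing \(\theta\), so that simultaneously:
\begin{itemize}
\item the curvature gain \(2^{m-n/2}\max_u L_u\), or rather its correct stationary-phase version, stays below \(2^{-sn/2}\);
\item the fluctuation variance \(\sum_u L_u^2\) stays below \(2^{-sn}\).
\end{itemize}
With only \(r\) moments, the variance may be infinite. I would first try to bound \(\sum_u L_u^2\le(\max_u L_u)^{2-r}\sum_u L_u^r\) and use the gap \(\delta\) to close the exponents. If this fails for a single mesoscopic level, the fallback is to iterate the decomposition along a ladder of levels \(m_1<m_2<\cdots\) and sum the resulting geometric errors.
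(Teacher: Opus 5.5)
There is a genuine gap, and it is the bookkeeping you flag as the main obstacle: no choice of \(\theta\) lets the single-level scheme reach \(s\) in the regime that matters for the endpoint.

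\emph{Why the single level fails.} Your mean-part estimate only works for \(m\le n/2+O(\delta n)\). Beyond that, the arcs within distance \(2^{m-n}\) of a stationary point see no oscillation. Since \(\xi\) ranges over all directions, that point may be one of minimal local dimension, so the local-mass bound \(2^{-(n-m)(s+\delta/2)}\) is all you have there. On the fluctuation side, you control each child term \(Z_u(\xi)=\int e^{-2\pi i\langle f(S_ut),\xi\rangle}\,d(\widetilde\nu^{(u)}-\Leb)(t)\) only through its truncated mass, so you cannot beat \((\sum_{|u|=m}L_u^2)^{1/2}\).

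Put \(\tau(q)=q-1-\log_2\E U^q\); the hypothesis says \(s+\delta\le\tau(r)/r\). If \(r\le2\), your bound \((\max_uL_u)^{2-r}\sum_uL_u^r\le(\sum_uL_u^r)^{2/r}\lesssim2^{-2m(s+\delta/2)}\) does close the variance side at \(m=n/2\). But whenever \(s>A_{<2}(U)\), the witness must have \(r>2\) and \(\tau(q)/q\) is increasing on \((1,2]\). Then \(\sum_{|u|=m}L_u^2\) is genuinely of order \(2^{-m(\tau(2)+o(1))}\): its mean is \((\E U^2/2)^m\), and \(\tau(2)\) is the correlation dimension of \(\widetilde\nu\). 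At \(m=n/2+O(\delta n)\) you get only \(|\wh{\nu_\circ}(\xi)|\lesssim2^{-n\tau(2)/4+O(\delta n)}\), i.e.\ the exponent \(\tau(2)/2=A_{<2}(U)\) instead of \(s\). The alternative \(\sum_uL_u^2\le\max_uL_u\sum_uL_u\) loses half the exponent in the same way.

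This regime cannot be set aside. For \(U\in\{1-\varepsilon,1+\varepsilon\}\) with equal probabilities, \(A_{<2}(U)\approx1/2\) while the hypothesis allows \(s\) close to \(1\). It is exactly where \(\Aloc\) exceeds \(A_{<2}\).

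\emph{What is missing.} \(Z_u(\xi)\) is itself an oscillatory integral of a centered cascade, at effective frequency about \(2^{n-m}\) times the distance from \(I_u\) to the stationary set. Its decay has to be used at every finer scale. The paper does this by splitting the parameter circle into a stationary tube and dyadic bands on which the phase derivative is \(\approx2^j\). Each band integral is expanded as the full martingale \(\sum_k\int e^{-2\pi i\langle f,\xi\rangle}\,d(\mu_{k+1}-\mu_k)\).
\begin{itemize}
\item For \(k<j\) (pre-bin), each arc gains a factor \(2^{k-j}\) from oscillation.
\item For \(k>j\) (post-bin), one uses local-mass decay \(\max_{|u|=k}\mu(I_u)\lesssim2^{-k(s+\delta/2)}\).
\end{itemize}
Summing over levels and bands gives a conditional variance smaller than \(2^{-sn}\) by a negative power of \(2^n\), uniformly in \(\xi\); by my own estimate it is of order \(n2^{-n(s+\delta/2)}\) when \(r>2\). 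Freedman's inequality then yields the stretched-exponential term. Your ladder fallback points in this direction, but it is the whole proof rather than bookkeeping.

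\emph{The truncation.} Separately, your truncation does not close even for \(r\le2\). A fixed cap \(T=2^{\epsilon n}\) on the child masses leaves a recentering bias of order \(T^{1-r}\sum_uL_u\), which forces \(T\ge2^{sn/(2(r-1))}\). That cap then swamps the Hoeffding (or Bernstein) variance gain unless \(\delta\) is large compared with \(s\). The cap must instead be predictable and depend on \(L_u\): cap each summand at \(\lambda=2^{-sn/2-\gamma n}\). The bias is then \(\lesssim\lambda^{1-r}\sum_uL_u^r\), which the \(r\)-th moment pays for. You also need a variance-sensitive inequality (Bernstein or Freedman), not Hoeffding.
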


The two terms in \eqref{eq:finite-r-annular-estimate} come from different parts
of the argument.  For the almost sure consequence, only the summability of the
right-hand side is used.  The theorem is formulated for an auxiliary weight
\(U\), since in the endpoint argument \(U\) will be chosen as a finite-moment
witness for a strict subendpoint exponent \(s<\Aloc(W)\).  This separates the
annular Fourier estimate from the endpoint optimization.

We emphasize two features of Theorem~\ref{thm:main-finite-r-annular}.  First,
\(U\) is not assumed bounded.  The proof uses predictable capping before
applying martingale concentration, and the cost of the cap is paid by an
\(r\)-tail compensator.  Second, the conclusion is annular and summable in
\(n\).  This is stronger than a pointwise-in-frequency estimate and is exactly
what is needed for almost sure Fourier decay.

The proof decomposes the phase \(t\mapsto \xi\cdot f(t)\) into a stationary
tube and dyadic derivative bands.  The stationary tube and very small derivative
bands are controlled by local-mass estimates.  On the remaining bands, the
Fourier integral is written as a sum of exact dyadic martingale arrays.  The
pre-bin part gains from oscillation before the natural phase-bin scale, while
the post-bin part is controlled by local-mass decay after that scale.
Predictable capping makes Freedman's inequality \cite{Freedman1975}
applicable without boundedness assumptions.  The resulting martingale estimates
have stretched-exponential tails, and the \(r\)-tail compensator is summably
small uniformly over the annulus.

We finally indicate how Theorem~\ref{thm:main-finite-r-annular} gives the lower
bound in Theorem~\ref{thm:circle}.  If \(0<s<\Aloc(W)\), then some
\(r>1\) witnesses \(s\), in the sense that
\[
        2^{1-r}\E[W^r]\le 2^{-r(s+\delta)}
\]
for some \(\delta>0\).  Applying Theorem~\ref{thm:main-finite-r-annular} with
\(U=W\) gives
\[
        |\wh{\mu_\circ}(\xi)|
        =
        O(|\xi|^{-s/2})
        \qquad (|\xi|\to\infty)
\]
almost surely on non-extinction.  Letting \(s\uparrow\Aloc(W)\) through a
countable sequence gives
\[
        \dimF(\mu_\circ)\ge \Aloc(W).
\]
Together with the upper bound, this proves \eqref{eq:circle-main}.  The stated
Fourier decay follows from the same argument for every strict subendpoint
exponent \(0<\sigma<\Aloc(W)\).

\section{The \texorpdfstring{\(b\)-adic}{b-adic} cube extension}
\label{s:badic-cube-extension}

We finally record the canonical \(b\)-adic extension on cubes.  This is the
additional consequence announced in this note.  The argument uses the same two
flat modules as the interval theorem: the tree-cylinder square-sum energy
module and the dense-grid Fourier-recursion module.  The curved endpoint module
from Section~\ref{s:circle-endpoint} is not involved.  The additional
high-dimensional obstruction is the ambient Fourier barrier
\[
        \dimF(\mu)\le 2.
\]

Let \(b\ge2\), \(d\ge1\), and set
\[
        B=b^d .
\]
Label the \(B\) first-generation \(b\)-adic subcubes of \([0,1]^d\) by
\(\{1,\ldots,B\}\), and write
\[
        T_B=\bigcup_{n\ge0}\{1,\ldots,B\}^n,
        \qquad
        \{1,\ldots,B\}^0=\{\varnothing\},
\]
for the rooted \(B\)-ary tree.  If \(u\in T_B\) and \(|u|=n\), let \(Q_u\) be
the corresponding \(b\)-adic cube of generation \(n\), and let \(a_u\) denote
its lower-left corner.  Thus
\[
        Q_u=a_u+b^{-n}[0,1]^d .
\]

Let
\[
        \{W_u:u\in T_B\setminus\{\varnothing\}\}
\]
be independent copies of a nonnegative random variable \(W\) satisfying the
\(b\)-adic Kahane--Peyri\`ere condition
\begin{equation}\label{eq:badic-KP}
        W\ge0,\qquad \E W=1,\qquad
        \E[W\log_2^+W]<\infty,\qquad
        \E[W\log_2W]<d\log_2 b .
\end{equation}
For \(u=(u_1,\ldots,u_n)\), set
\[
        Y_u=\prod_{k=1}^n W_{u|k},
        \qquad
        Y_\varnothing=1,
\]
and define the level-\(n\) measure on \([0,1]^d\) by
\[
        d\mu_n(x)
        =
        \sum_{|u|=n}Y_u\one_{Q_u}(x)\,dx .
\]
Under \eqref{eq:badic-KP}, the measures \((\mu_n)\) converge weakly almost
surely to a finite random measure \(\mu\), the canonical \(b\)-adic Mandelbrot
cascade on \([0,1]^d\).  All dimension statements below are made on the
non-extinction event
\(
 \{\mu\ne0\}.
\)

For \(0<s<d\) and \(0<q<2\), define
\begin{equation}\label{eq:badic-ms}
        m_s(q)
        =
        b^{\,d+\frac q2(s-2d)}\,\E[W^q],
\end{equation}
with the convention that \(m_s(q)=\infty\) if \(\E[W^q]=\infty\).  Set
\begin{equation}\label{eq:badic-DE-def}
        D_E(W)
        =
        \sup\Bigl\{
        s\in(0,d):\inf_{0<q<2}m_s(q)<1
        \Bigr\}.
\end{equation}
Equivalently,
\begin{equation}\label{eq:badic-DE-equivalent}
        D_E(W)
        =
        \sup_{q\in I(W)}
        \left(
        2d-\frac2q
        \left(
        d+\frac{\log_2\E[W^q]}{\log_2 b}
        \right)
        \right),
\end{equation}
where
\[
        I(W)=
        \{q\in(1,2):B^{1-q}\E[W^q]<1\},
\]
and the supremum over the empty set is \(0\).

\begin{theorem}[Energy theorem for the \(b\)-adic cube cascade]
\label{thm:badic-energy}
Let \(b\ge2\), \(d\ge1\), and let \(W\) satisfy \eqref{eq:badic-KP}.  Let
\(\mu\) be the associated canonical \(b\)-adic Mandelbrot cascade on
\([0,1]^d\).  Then, almost surely on \(\{\mu\ne0\}\),
\begin{equation}\label{eq:badic-energy-formula}
        \dimE(\mu)=\dimtwo(\mu)=D_E(W).
\end{equation}
\end{theorem}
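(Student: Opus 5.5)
The proof splits into three steps. First, the identity \(\dimE(\mu)=\dimtwo(\mu)\) is deterministic. Second, the lower bound \(\dimtwo(\mu)\ge D_E(W)\) is a fractional-moment estimate. Third, the upper bound \(\dimtwo(\mu)\le D_E(W)\) on \(\{\mu\ne0\}\) is a supercritical branching argument, adapted from the energy module of the interval theorem to the \(B\)-ary tree.

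\emph{Step 1: energy equals correlation dimension.} For the first step, write \(S_n=\sum_{|u|=n}\mu(Q_u)^2\). For \(0<s<d\),
\[
\iint|x-y|^{-s}\,d\mu\,d\mu\asymp\sum_n b^{ns}\sum_{|u|=n}\mu(Q_u)\mu(\widetilde Q_u),
\]
where \(\widetilde Q_u\) is the union of the \(3^d\) neighbouring cubes of \(Q_u\). By Cauchy--Schwarz the neighbour sum is at most \(3^d S_n\), and it is trivially at least \(S_n\). Hence \(\dimE(\mu)=\dimtwo(\mu)\) for any finite measure on \([0,1]^d\). This is the standard square-sum criterion quoted in the introduction, and it is used here only to get the restriction \(s<d\) right.

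\emph{Step 2: \(\dimtwo(\mu)\ge D_E(W)\).} By the branching property,
\[
\mu(Q_u)=B^{-n}Y_uM_u,
\]
where the \(M_u\) are copies of \(M=\mu([0,1]^d)\), independent of \(\{Y_u\}\). First I check that the two descriptions of \(D_E(W)\) agree. Since
\[
m_s(q)=B^{1-q}b^{qs/2}\,\E W^q,
\]
Jensen's inequality gives \(m_s(q)\ge b^{d(1-q)+qs/2}>1\) for \(q\le1\). For \(q\in(1,2)\), \(m_s(q)<1\) forces \(B^{1-q}\E W^q<1\), i.e.\ \(q\in I(W)\). Solving \(m_s(q)<1\) for \(s\) gives \eqref{eq:badic-DE-equivalent}.

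For \(q\in I(W)\), the Kahane--Peyri\`ere moment theorem gives \(\E M^q<\infty\). Subadditivity of \(t\mapsto t^{q/2}\) then yields
\[
\E S_n^{q/2}\le\sum_{|u|=n}\E\mu(Q_u)^q=\bigl(B^{1-q}\E W^q\bigr)^n b^{-ndq}\,\E M^q .
\]
By Markov's inequality,
\[
\Pbb\bigl(S_n>b^{-ns}\bigr)\le \E M^q\, m_s(q)^n .
\]
If \(m_s(q)<1\), Borel--Cantelli gives \(S_n\le b^{-ns}\) eventually. Taking countably many \(s\uparrow D_E(W)\) gives \(\dimtwo(\mu)\ge D_E(W)\) almost surely.

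\emph{Step 3: \(\dimtwo(\mu)\le D_E(W)\) on non-extinction.} Fix \(c>0\) with \(p=\Pbb(M\ge c)>0\). Then
\[
S_n\ge c^2B^{-2n}\sum_{|u|=n}Y_u^2\one\{M_u\ge c\}.
\]
Because the \(M_u\) are independent of the \(Y_u\), it suffices to show the following. For every \(s>D_E(W)\), with high conditional probability on non-extinction, there are many \(u\) at level \(n\) whose values \(Y_u^2\) sum to more than \(B^{2n}b^{-ns}\); then a positive fraction of them survive the thinning by \(\one\{M_u\ge c\}\).

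For the weight sum I view \(\log_b Y_u\) as a branching random walk with offspring number \(B\) and steps \(\log_b W\), and apply Biggins' theorem on the growth of \(\sum_u Y_u^{2}\) along typical versus extremal paths. The relevant rate is
\[
\inf_{0<q\le2}\frac 2q\log_b\bigl(B\,\E W^q\bigr).
\]
Its comparison with \(2d-s\) reproduces the condition \(\inf_q m_s(q)\ge1\), i.e.\ \(s\ge D_E(W)\). I would first handle \(s>D_E(W)\) with strict inequality, using the lower-bound side of Biggins' theorem through a finite-block construction. Choose a block length \(k\) and a set \(G\) of words of length \(k\) on which \(Y\) lies in a fixed window, with enough such words that the block-level branching process of good words is supercritical. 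This is the finite-block weighted-growth obstruction.

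Conditioning on non-extinction is handled by alive-tree amplification. On \(\{\mu\ne0\}\), some vertex at a finite level has a surviving subtree. The good-block process started from descendants of that vertex survives with probability tending to one as the number of independent attempts grows. Hence \(S_n\ge b^{-ns}\) infinitely often, almost surely on \(\{\mu\ne0\}\).

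\emph{Main obstacle.} I expect the main difficulty at the boundary where the optimizing \(q\) leaves \(I(W)\) or sits at \(q=2\), where the infimum in \(D_E(W)\) may be attained only in a limit. This is the role of the no-plateau lemma in the interval module. I would reprove it for
\[
q\mapsto \log_b\bigl(B\,\E W^q\bigr)
\]
by strict convexity, treating the degenerate case where \(W\) is a.s.\ constant on \(\{W>0\}\) separately by direct computation.
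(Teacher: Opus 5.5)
Your plan follows the paper's own route. You use the deterministic square-sum criterion for \(\dimE(\mu)=\dimtwo(\mu)\), the fractional-moment bound \(\Pbb(S_n>b^{-ns})\le \E M^q\,m_s(q)^n\) for the lower bound, and, for the upper bound, a finite-block supercritical embedding combined with thinning by \(\{M_u\ge c\}\) and alive-tree amplification on non-extinction. That embedding is the paper's weighted-growth obstruction for the coefficients \(b^{s-2d}W_i^2\).

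There are two small slips, neither of which the argument depends on.

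\emph{The Jensen step.} In your side check of the two formulas for \(D_E(W)\), Jensen for the concave map \(t\mapsto t^q\) gives \(\E W^q\le1\) when \(q\le1\). So \(m_s(q)\ge b^{d(1-q)+qs/2}\) is false for every non-constant \(W\) and \(0<q<1\). The claim \(m_s(q)>1\) must come from the Kahane--Peyri\`ere condition \(\E[W\log_b W]<d\) instead. Applying Jensen under the size-biased law \(W\,d\Pbb\) gives
\[
\E W^q=\E[W\cdot W^{q-1}]\ge b^{-(1-q)\E[W\log_b W]},
\]
hence
\[
m_s(q)\ge b^{(1-q)(d-\E[W\log_b W])+qs/2}>1 .
\]

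\emph{The moment display.} The factor \(b^{-ndq}\) in your display for \(\E S_n^{q/2}\) is spurious. The correct value is \((B^{1-q}\E W^q)^n\E M^q\), and that is what your Markov bound actually uses.
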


\begin{proof}[Sketch of proof]
This is the tree-cylinder square-sum argument from the interval theorem with
the replacements
\[
        2\rightsquigarrow B=b^d,
        \qquad
        I_u\rightsquigarrow Q_u,
        \qquad
        2^{-n}\rightsquigarrow b^{-n}.
\]
Let
\[
        \Sigma_n=\sum_{Q\in\mathcal D_n^{b,d}}\mu(Q)^2,
        \qquad
        X_n^{(s)}=b^{sn}\Sigma_n .
\]
The branching relation for the \(b\)-adic cube cascade gives
\[
        X_{n+1}^{(s)}
        =
        \sum_{i=1}^B A_i^{(s)}X_n^{(i,s)},
        \qquad
        A_i^{(s)}=b^{s-2d}W_i^2,
\]
where \(X_n^{(1,s)},\ldots,X_n^{(B,s)}\) are independent descendant copies.
Thus the moment profile of the branching coefficients is
\[
        \E\sum_{i=1}^B \bigl(A_i^{(s)}\bigr)^{q/2}
        =
        b^{\,d+\frac q2(s-2d)}\E[W^q]
        =
        m_s(q).
\]
This is the \(b\)-adic tree-cylinder form of the classical scaling-exponent
mechanism for independent random cascades; compare Molchan's scaling-exponent
formalism \cite{Molchan1996}.  The point here is to use this exponent in the
precise tree-cylinder normalization needed for the Fourier-recursion argument.

The subcritical side is immediate from the same fractional-moment contraction
as in the interval proof.  If \(m_s(q)<1\) for some \(1<q<2\), then
\[
        b^{sn}\Sigma_n\to0
        \qquad\text{almost surely on }\{\mu\ne0\}.
\]
Consequently,
\[
        \liminf_{n\to\infty}
        \frac{-\log_2\Sigma_n}{n\log_2 b}
        \ge s
        \qquad\text{almost surely on }\{\mu\ne0\}.
\]
Letting \(s\uparrow D_E(W)\) gives
\[
        \dimtwo(\mu)\ge D_E(W).
\]

For the reverse inequality, the no-plateau argument, the finite-block
weighted-growth obstruction, and the alive-tree amplification from the interval
proof apply to the coefficients
\[
        A_i^{(s)}=b^{s-2d}W_i^2 .
\]
Thus, for every \(s>D_E(W)\),
\[
        \limsup_{n\to\infty} b^{sn}\Sigma_n=\infty
        \qquad\text{almost surely on }\{\mu\ne0\}.
\]
Hence
\[
        \dimtwo(\mu)\le s
\]
on non-extinction.  Letting \(s\downarrow D_E(W)\) gives
\[
        \dimtwo(\mu)=D_E(W).
\]
Finally, the deterministic equivalence between the Riesz energy dimension and
the \(b\)-adic square-sum exponent for finite measures on \([0,1]^d\) gives
\[
        \dimE(\mu)=\dimtwo(\mu)=D_E(W).
\]
\end{proof}

\begin{proposition}[Universal Fourier barrier for canonical cube cascades]
\label{prop:badic-fourier-barrier}
Let \(b\ge2\), \(d\ge1\), and let \(W\) satisfy \eqref{eq:badic-KP}.  Let
\(\mu\) be the associated canonical \(b\)-adic Mandelbrot cascade on
\([0,1]^d\).  Then, almost surely on \(\{\mu\ne0\}\),
\[
        \dimF(\mu)\le2.
\]
\end{proposition}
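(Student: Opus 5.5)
The plan is to reduce to a one-dimensional statement along a coordinate axis. For \(\xi=t e_1\) with \(t\in\R\) we have
\[
\wh\mu(te_1)=\wh\nu(t),\qquad \nu=(\pi_1)_\#\mu ,
\]
where \(\pi_1(x)=x_1\) and \(\nu\) is a finite measure on \([0,1]\). Suppose, for contradiction, that with positive probability on \(\{\mu\ne0\}\) we have \(|\wh\mu(\xi)|=O(|\xi|^{-s/2})\) for some \(s>2\). Restricting to the axis gives \(|\wh\nu(t)|=O(|t|^{-1-\alpha})\) with \(\alpha=s/2-1>0\). Then \(\wh\nu\in L^1(\R)\), so \(\nu\) has a continuous density \(f\) on \(\R\), supported in \([0,1]\). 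By the standard Fourier-inversion estimate, \(f\) is \(\beta\)-H\"older for every \(\beta<\alpha\). The proof therefore reduces to showing that the projected cascade density cannot be continuous across the dyadic-type hyperplanes \(\{x_1=j/b\}\) (in particular at the endpoints \(0\) and \(1\), where \(f\) would have to vanish).

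Next I will use the branching structure. Group the first-generation cubes into the \(b\) slabs \(S_j=\{j/b\le x_1\le (j+1)/b\}\). The restriction of \(\nu\) to \([j/b,(j+1)/b]\) is \(b^{-d}\) times a sum, over the \(b^{d-1}\) cubes in \(S_j\), of \(W_u\) times an affine rescaled copy of \(\nu\) built from an independent subtree. Writing \(f^{(u)}\) for the densities of these independent copies, the one-sided boundary values of \(f\) at \(x_1=j/b\) are
\[
f(j/b^-)=b^{1-d}\sum_{u\in S_{j-1}}W_u f^{(u)}(1),\qquad f(j/b^+)=b^{1-d}\sum_{u\in S_j}W_u f^{(u)}(0),
\]
and these are independent random variables. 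Continuity of \(f\) forces \(f(0)=f(1)=0\), and hence, by the displayed relations and positivity of the summands, \(f^{(u)}(0)=f^{(u)}(1)=0\) for every child \(u\) with \(W_u>0\). Iterating, \(f^{(v)}(0)=0\) for every alive vertex \(v\) of the tree. Combining this with the H\"older bound, every alive subtree \(v\) on the face \(\{x_1=0\}\) satisfies
\[
\nu^{(v)}([0,h])\le C_v h^{1+\beta}.
\]

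The contradiction will come from the lower bound
\[
\nu([0,b^{-n}])=b^{-n}\,b^{-(d-1)n}\sum_{|u|=n,\;Q_u\subset\{x_1\le b^{-n}\}}Y_u\,M_u ,
\]
where \(M_u\) are i.i.d.\ copies of the total mass. For \(d\ge2\) this is a spatial average over \(b^{(d-1)n}\) cubes on a face. It is itself a \(b^{d-1}\)-ary cascade total-mass martingale, with weights \(W\) and expectation \(1\), and it converges to a nondegenerate limit \(Z\ge0\) whenever \(\E[W\log W]<(d-1)\log b\). On \(\{Z>0\}\) this gives \(\nu([0,b^{-n}])\asymp b^{-n}\), contradicting the bound \(O(b^{-n(1+\beta)})\). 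When \(d=1\), or when the face cascade is degenerate, I will instead use the second identity \(f(j/b^-)=f(j/b^+)\). It equates two independent random variables, at least one of which is a.s. nonzero on its alive event unless \(f^{(u)}(1)\) and \(f^{(u)}(0)\) vanish identically. In that case I iterate to show \(f\equiv0\) on a dense set of \(b\)-adic points, hence \(f\equiv0\) and \(\mu=0\).

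The main obstacle is the degenerate-face regime, where the lower-dimensional face cascade dies out and boundary values are genuinely zero. There the argument must rest on the H\"older rigidity rather than on nonvanishing boundary values. I would first try an interior version: take a point \(x_1=a\) with a \(b\)-adic neighbourhood of positive mass, and compare \(\nu([a,a+h])\) with \(\nu([a-h,a])\) via the two independent slab cascades. I would then show that their difference is of order at least \(h\) (up to \(h^{o(1)}\)) infinitely often, using a zero-one law over the independent subtrees attached along the hyperplane. Any H\"older-continuous density makes this difference \(o(h)\), which yields the contradiction and hence \(\dimF(\mu)\le2\) almost surely on \(\{\mu\ne0\}\).
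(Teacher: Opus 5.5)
Your reduction is sound: you project to \(\nu=(\pi_1)_\#\mu\), and decay \(O(|t|^{-1-\alpha})\) along \(e_1\) then gives a continuous H\"older density \(f\) with \(f(0)=f(1)=0\). The face-cascade contradiction at \(\{x_1=0\}\) is also sound, and this is a more self-contained route than the paper's sketch, which shares the projection step but then imports the one-dimensional Chen--Li--Suomala upper-bound mechanism. There is a genuine gap, however. The face argument yields a contradiction only on the face-survival event \(\{Z>0\}\), not almost surely on \(\{\mu\ne0\}\).

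When \(\Pbb(W=0)=0\) (and the face cascade is nondegenerate) both events have full probability, so there your argument closes. When \(\Pbb(W=0)>0\) they differ. For example, take \(d=3\), \(b=2\), \(\Pbb(W=4/3)=3/4=1-\Pbb(W=0)\). Then \(D_E(W)=3-\log_2(4/3)>2\), and with positive probability the face cascades on both \(\{x_1=0\}\) and \(\{x_1=1\}\) die while \(\mu\ne0\). On that event continuity of \(f\) forces nothing to vanish. At an interior \(b\)-adic hyperplane \(x_1=t_0\) it only forces the right and left derivatives of \(\nu\) at \(t_0\) to coincide.

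Your fallback (the two sides are independent and one of them is nonzero) gives no contradiction. Independent variables coincide with positive probability whenever they share an atom; for \(W\equiv1\), \(f=\one_{[0,1]}\) is continuous at every interior \(b\)-adic point. The missing ingredient is anti-concentration. Conditionally on the first \(n\) generations, each one-sided derivative at a level-\(n\) point is a positive combination of independent copies of the face-martingale limit. Getting there requires a conditional law of large numbers to remove the factors \(M_u\); note that \(b^{-(d-1)n}\sum Y_uM_u\) is not itself a martingale, contrary to what you assert. You must then prove that this limit has no atoms in \((0,\infty)\) when \(W\) is non-constant. Only then does continuity force \(f\) to vanish at every \(b\)-adic point, hence \(f\equiv0\) and \(\mu=0\).

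Your last paragraph also has the regimes reversed. If \(\E[W\log_bW]\ge d-1\), both one-sided derivatives vanish at every \(b\)-adic point. So a lower bound of order \(h^{1+o(1)}\) on \(|\nu([a,a+h])-\nu([a-h,a])|\) is the wrong target; it fails when \(\E[W\log_bW]>d-1\), where the face martingale decays exponentially. Yet this regime is the easy one, since vanishing of the continuous \(f\) on a dense set already gives \(f\equiv0\).

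It never even needs treatment. For \(d\le2\) the proposition is immediate from the restriction \(s\le d\) in the definition of \(\dimF\), so your \(d=1\) discussion is unnecessary. For \(d\ge3\), the bound \(\dimF(\mu)\le\dimE(\mu)=D_E(W)\) from Theorem~\ref{thm:badic-energy} reduces matters to \(D_E(W)>2\). By convexity of \(q\mapsto\log_b\E W^q\) this forces \(\E[W\log_bW]<d-2\), so the face cascade is nondegenerate.

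Finally, \(f(0)=f(1)=0\) constrains only the children in \(S_0\) (at \(0\)) and in \(S_{b-1}\) (at \(1\)), not every child. The children's projected measures also need not have continuous densities. What you can actually use is the bound \(\nu^{(v)}([0,h])\le C_vh^{1+\beta}\) for alive face vertices \(v\), which is what you end up using.
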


\begin{proof}[Sketch of proof]
It is enough to rule out Fourier decay faster than \(|\xi|^{-1}\) along a
coordinate axis.  Let \(\pi_1:[0,1]^d\to[0,1]\) be the first-coordinate
projection and set
\[
        \nu_1=(\pi_1)_{\#}\mu .
\]
Then
\[
        \wh{\mu}(Re_1)=\wh{\nu_1}(R),
        \qquad R\in\R.
\]
The projected measure \(\nu_1\) is itself a one-dimensional \(b\)-adic
Mandelbrot cascade.  Its first-generation weights are
\[
        V_j
        =
        b^{-(d-1)}
        \sum_{\ell=1}^{b^{d-1}} W_{j,\ell},
        \qquad
        j=1,\ldots,b,
\]
where the \(W_{j,\ell}\) are independent copies of \(W\).  The total mass of
\(\nu_1\) is the same as the total mass of \(\mu\), so \(\nu_1\) is nonzero on
the same non-extinction event.

The one-dimensional upper-bound mechanism of Chen--Li--Suomala
\cite{ChenLiSuomala2025} applies to this projected \(b\)-adic cascade; in the
minimal Kahane--Peyri\`ere regime the required martingale input is supplied by
nondegeneracy of the projected cascade.  It gives that, on non-extinction,
\[
        |\wh{\nu_1}(R)|
        \neq
        O(|R|^{-1-\varepsilon})
        \qquad (|R|\to\infty)
\]
for every \(\varepsilon>0\).  Since
\(\wh{\mu}(Re_1)=\wh{\nu_1}(R)\), no Fourier-dimension exponent strictly larger
than \(2\) can be admissible for \(\mu\).  Hence
\(
\dimF(\mu)\le2
\)
almost surely on \(\{\mu\ne0\}\).
\end{proof}

\begin{theorem}[High-dimensional canonical formula]
\label{thm:badic-fourier}
Let \(b\ge2\), \(d\ge1\), and let \(W\) satisfy \eqref{eq:badic-KP}.  Let
\(\mu\) be the associated canonical \(b\)-adic Mandelbrot cascade on
\([0,1]^d\).  Then, almost surely on \(\{\mu\ne0\}\),
\begin{equation}\label{eq:badic-main}
        \dimF(\mu)=\min\{2,D_E(W)\}.
\end{equation}
Equivalently,
\[
        \dimF(\mu)=\min\{2,\dimE(\mu)\}
        =
        \min\{2,\dimtwo(\mu)\}.
\]
\end{theorem}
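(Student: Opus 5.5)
The plan is to prove \eqref{eq:badic-main} as a matching pair of inequalities. The equivalent formulation then follows at once from Theorem~\ref{thm:badic-energy}, which gives \(\dimE(\mu)=\dimtwo(\mu)=D_E(W)\) almost surely on \(\{\mu\ne0\}\).

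\textbf{Upper bound.} Two ingredients combine directly. The deterministic inequality \(\dimF(\nu)\le\dimE(\nu)\) for nonzero finite Borel measures, together with Theorem~\ref{thm:badic-energy}, gives \(\dimF(\mu)\le D_E(W)\) almost surely on non-extinction. Proposition~\ref{prop:badic-fourier-barrier} gives \(\dimF(\mu)\le 2\) on the same event. Intersecting these two full-probability events yields \(\dimF(\mu)\le\min\{2,D_E(W)\}\).

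\textbf{Lower bound.} Fix \(0<\sigma<\min\{2,D_E(W)\}\). The goal is \(|\wh\mu(\xi)|\le C_\sigma|\xi|^{-\sigma/2}\) almost surely on non-extinction; letting \(\sigma\uparrow\min\{2,D_E(W)\}\) along a countable sequence then finishes the argument.

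By \eqref{eq:badic-DE-equivalent}, pick \(s\in(\sigma,D_E(W))\) with \(s<2\), and \(q\in(1,2)\) with \(m_s(q)<1\). This is the witness for the subcritical square-sum contraction for the coefficients \(A_i^{(s)}=b^{s-2d}W_i^2\).

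I would then transplant the Fourier module of Theorem~\ref{thm:main-vector-exact-fourier-dimension} to the \(B\)-ary tree, with the substitutions \(2\rightsquigarrow B\), \(I_u\rightsquigarrow Q_u\) and \(2^{-n}\rightsquigarrow b^{-n}\), as in the energy sketch. The steps are as follows.
\begin{enumerate}
\item \emph{Centering.} Since \(\E W=1\), we have \(\E\mu_n=\Leb|_{[0,1]^d}\). The mean contribution \(\wh{\one_{[0,1]^d}}(\xi)\) decays only like \(|\xi_j|^{-1}\) in the coordinate of largest modulus, so it is \(O(|\xi|^{-1})\). This is exactly where the cap at \(2\) enters, and it is harmless because \(\sigma<2\).
\item \emph{Decomposition of the fluctuation.} Write \(\wh\mu(\xi)-\E\wh\mu(\xi)\) as a telescoping martingale sum over generations. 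At generation \(k\) the increment is
\[
\sum_{|u|=k}Y_u\bigl(W'_{u}-1\bigr)\,\text{-type terms multiplied by }b^{-dk}e^{-2\pi i\langle a_u,\xi\rangle}\,\wh{\mu^{(u)}}(b^{-k}\xi),
\]
where the \(\mu^{(u)}\) are independent rescaled copies of the cascade.
\item \emph{Mesoscopic split.} Split at the scale \(b^{k}\approx|\xi|\). For \(b^{k}\ll|\xi|\), use the factorized product \(\wh{\one_{[0,1]^d}}(b^{-k}\xi)\) and recurse on \(\wh{\mu^{(u)}}\) at the lower frequency \(b^{-k}\xi\); this is the ladder iteration. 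For \(b^{k}\gtrsim|\xi|\), bound the term by the square sum \(\Sigma_k\).
\item \emph{Conditional estimates.} Conditionally on \(\mathcal F_k\), the terms are independent and centered. A von Bahr--Esseen-type \(\ell^q\) inequality (the vector-valued \(\ell^r\) contraction, with \(r=q\)) then bounds the \(q\)-th moment by \(\sum_{|u|=k}Y_u^{q}\,b^{-dkq}\). Up to constants this is controlled through \(m_s(q)^k\), which gives a gain \(b^{-ksq/2}\) per level.
\item \emph{From grid to all frequencies.} Sum over a dense frequency grid in each annulus \(b^n\le|\xi|<b^{n+1}\); the grid has polynomially many points in \(b^n\). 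Use Markov's inequality and Borel--Cantelli, and then pass from the grid to all \(\xi\) by the Lipschitz bound \(|\nabla\wh\mu|\le 2\pi\sqrt d\,\mu([0,1]^d)\).
\end{enumerate}

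\textbf{Main obstacle.} I expect the hardest step to be the ladder iteration under only the minimal condition \eqref{eq:badic-KP}, where \(\E W^q\) may be finite only for \(q\) near \(1\). In \(d\) dimensions the grid cardinality grows like \(b^{nd}\) rather than \(2^n\), so the union bound must be absorbed by the strict gap \(m_s(q)<1\) through iteration. A single Markov step gives only a polynomial gain, so I would first try to iterate the recursion over \(O(\log n)\) rungs, as in the interval proof, so that the failure probabilities become summable. The balanced-vector formalism is not needed here, since the weights are i.i.d.; the only genuinely new points are the \(d\)-dimensional grid count and the \(O(|\xi|^{-1})\) Lebesgue term.
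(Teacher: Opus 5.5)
Your upper bound is the paper's argument. The outline of your lower bound (Lebesgue forcing \(|h_n|\lesssim b^{-n}\) as the source of the cap at \(2\), recursion on descendant copies at lower frequency, logarithmic ladder, dense grid, Markov, Borel--Cantelli, Lipschitz passage) also matches the paper's.

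\textbf{The gap: the supremum over the grid.} In step~4 you take the vector-valued contraction with \(r=q\), and in step~5 you union-bound over the grid. But \(\E\|F\|_{\ell^q(G_n)}^q=\sum_{\eta\in G_n}\E|F(\eta)|^q\), so this is a scalar von Bahr--Esseen estimate multiplied by \(\#G_n\), and it cannot close.
\begin{itemize}
\item To pass from the grid to the shell with error \(b^{-\beta n}\), using the Lipschitz constant \(\asymp b^n\), the mesh must be \(\lesssim b^{-(1+\beta)n}\). Hence \(\#G_n\gtrsim b^{d(1+\beta)n}\).
\item The pointwise gain your recursion produces is at most \(\kappa(q)^n=b^{-qn\beta_W(q)}\), and the forcing term caps it further. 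Moreover
\[
q\beta_W(q)=d(q-1)-\log_b\E[W^q]\le d(q-1)<d,
\]
since \(\E[W^q]\ge1\) for \(q>1\) and the relevant \(q\) lies in \((1,2)\).
\item So the Markov--union bound for \(\{\max_{G_n}|F_n|>b^{-\sigma n/2}\}\) is at least of order \(b^{n(d(1+\beta)+q\sigma/2-q\beta_W(q))}\), which grows exponentially.
\end{itemize}
The strict gap \(m_s(q)<1\) buys only an extra \(b^{-\varepsilon n}\) beyond \(b^{-qns/2}\). More ladder rungs only push the exponent up to \(\min\{1,\beta_W(q)\}\). Neither can absorb a factor \(b^{dn}\), so the plan in your ``main obstacle'' paragraph cannot succeed as stated.

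\textbf{The missing idea: decouple the norm index from the moment exponent.} The paper measures the grid vector in \(\ell^{r_n}(G_n)\) with \(r_n=\lceil d(1+\beta_1)n\rceil\to\infty\), while keeping the moment exponent \(q\in I(W)\).
\begin{itemize}
\item Since \((\#G_n)^{1/r_n}=O(1)\), the norm \(\|\cdot\|_{\ell^{r_n}(G_n)}\) is comparable to \(\max_{G_n}|\cdot|\).
\item Conditionally on the first \(m=k-k'\) generations, the terms \(B^{-m}Y_v e^{-2\pi i b^k\langle\eta,a_v\rangle}F^{(v)}_{k'}\) are independent centered \(\ell^{r_n}\)-valued vectors. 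The \(L^q\) von Bahr--Esseen inequality applies to them with constant \(O(r_n^{q/2})\), because \(\ell^r\) has type \(2\) with constant \(O(\sqrt r)\).
\item This gives \(\Lambda_k^{(n)}\le C(r_n^{q/2}\kappa(q)^{k-k'}\Lambda_{k'}^{(n)}+b^{-qk'})\), in which \(\#G_n\) never appears.
\item The loss \(r_n^{q/2}\) per rung accumulates over \(O(\log n)\) rungs to \(e^{O((\log n)^2)}=b^{o(n)}\). Only then do Markov and Borel--Cantelli give summable failure probabilities.
\end{itemize}

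\textbf{Two smaller points.}
\begin{itemize}
\item On the high-generation side the relevant quantity is the \(q\)-power sum \(\sum_{|u|=k}B^{-kq}Y_u^q\), with mean \(\kappa(q)^k=m_s(q)^kb^{-ksq/2}\). It is not the square sum \(\Sigma_k\).
\item Your step-2 formula mixes two different decompositions. The generational martingale differences carry \(\wh{\one_{[0,1]^d}}(b^{-k-1}\xi)\), not \(\wh{\mu^{(u)}}\). The ladder actually uses the exact block identity \(F_k=\sum_{|v|=m}B^{-m}Y_ve^{-2\pi i b^k\langle\eta,a_v\rangle}F^{(v)}_{k'}+h_{k'}\sum_{|v|=m}B^{-m}(Y_v-1)e^{-2\pi i b^k\langle\eta,a_v\rangle}\).
\end{itemize}
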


\begin{proof}[Sketch of proof]
The lower bound is the flat Fourier module from the interval theorem.  We only
indicate the changes in the \(b\)-adic cube setting.  Replace the dyadic annulus
by the normalized \(b\)-adic shell
\[
        \mathcal A_{b,d}^{\infty}
        =
        \{\eta\in\R^d:1\le |\eta|_\infty\le b\}.
\]
For \(\eta\in\mathcal A_{b,d}^{\infty}\), write
\[
        X_n(\eta)=\wh{\mu}(b^n\eta),
        \qquad
        h_n(\eta)=\int_{[0,1]^d}e^{-2\pi i b^n\langle \eta,x\rangle}\,dx,
\]
and
\[
        F_n(\eta)=X_n(\eta)-h_n(\eta).
\]
The centering is unchanged:
\(
\E\mu_n=\Leb|_{[0,1]^d}.
\)
Moreover,
\[
        |h_n(\eta)|\lesssim b^{-n},
\]
and
\[
        |X_n(\eta)-X_n(\eta')|
        \lesssim
        \mu([0,1]^d)b^n|\eta-\eta'|_\infty .
\]
Thus the dense-grid passage from grid estimates to shell estimates is the same
as in the interval case.

The moment profile in the Fourier recursion is
\begin{equation}\label{eq:badic-kappa}
        \kappa(q)=B^{1-q}\E[W^q].
\end{equation}
For \(q\in I(W)\), set
\begin{equation}\label{eq:badic-beta}
        \beta_W(q)
        =
        -\frac{\log_b\kappa(q)}{q}
        =
        d-\frac1q
        \left(
        d+\frac{\log_2\E[W^q]}{\log_2 b}
        \right).
\end{equation}
Choose
\[
        0<\beta<\beta_0<\beta_1<\min\{1,\beta_W(q)\}.
\]
Let \(G_n\subset\mathcal A_{b,d}^{\infty}\) be a maximal
\(b^{-(1+\beta_1)n}\)-separated grid, and put
\[
        r_n=\lceil d(1+\beta_1)n\rceil .
\]
The factor \(d\) enters only through the cardinality estimate
\[
        \#G_n\asymp b^{d(1+\beta_1)n}.
\]

The mesoscopic decomposition has the same form as in the interval proof.  If
\(m=k-k'\), then
\[
\begin{aligned}
F_k(\eta)
&=
\sum_{|v|=m}
B^{-m}Y_v
e^{-2\pi i b^k\langle \eta,a_v\rangle}
F_{k'}^{(v)}(\eta)
\\
&\quad
+
h_{k'}(\eta)
\sum_{|v|=m}
B^{-m}(Y_v-1)
e^{-2\pi i b^k\langle \eta,a_v\rangle}.
\end{aligned}
\]
The first term is the centered descendant contribution, and the second is the
Lebesgue forcing term.  Applying the same vector-valued
\(\ell^{r_n}\)-contraction gives the ladder recursion
\[
        \Lambda_k^{(n)}
        \le
        C\left(
        r_n^{q/2}\kappa(q)^{\,k-k'}\Lambda_{k'}^{(n)}
        +
        b^{-qk'}
        \right),
        \qquad
        \Lambda_k^{(n)}
        =
        \E\bigl[\|F_k\|_{\ell^{r_n}(G_n)}^q\bigr].
\]
This is exactly the interval recursion with \(\rho(q)\) replaced by
\(\kappa(q)\).  The same logarithmic ladder iteration, followed by Markov's
inequality and Borel--Cantelli, yields
\[
        \sup_{\eta\in\mathcal A_{b,d}^{\infty}}
        |\wh{\mu}(b^n\eta)|
        \le
        C_{q,\beta}(\omega)b^{-\beta n}
\]
for all sufficiently large \(n\).  Equivalently,
\[
        |\wh{\mu}(\xi)|
        \le
        C_{q,\beta}(\omega)|\xi|^{-\beta}
        \qquad (|\xi|\gg1).
\]
Letting \(\beta\uparrow\min\{1,\beta_W(q)\}\), and then optimizing over
\(q\in I(W)\), gives
\[
        \dimF(\mu)\ge \min\{2,D_E(W)\}.
\]

For the upper bound, Theorem~\ref{thm:badic-energy} gives
\[
        \dimF(\mu)\le \dimE(\mu)=D_E(W).
\]
The additional ambient restriction is Proposition~\ref{prop:badic-fourier-barrier},
which gives
\(
\dimF(\mu)\le2.
\)
Therefore
\[
        \dimF(\mu)\le \min\{2,D_E(W)\}.
\]
Combining the lower and upper bounds proves \eqref{eq:badic-main}.  The
formulation with \(\dimE(\mu)\) and \(\dimtwo(\mu)\) follows from
Theorem~\ref{thm:badic-energy}.
\end{proof}

\begin{remark}
\label{rem:badic-truncation}
The truncation at \(2\) is a Fourier phenomenon, not an energy phenomenon.
When \(d\le2\), it is invisible because \(D_E(W)\le d\le2\).  In dimensions
\(d>2\), the energy dimension may exceed \(2\), but the canonical cascade cannot
have Fourier dimension larger than \(2\) because of the universal Fourier
barrier.
\end{remark}

\section*{Acknowledgments}

X. F. was partially supported by the National Science
and Technology Council, Taiwan, grant no. 114-2115-M-A49-003-MY3.

The authors used an artificial-intelligence tool for language editing and
\LaTeX\ formatting; all mathematical content was written, checked, and approved
by the authors.

\section*{Declaration of interests}

The authors do not work for, advise, own shares in, 
or receive funds from any organisation that could benefit from this article, 
and have declared no affiliation other than their research organisations.

\end{document}